\documentclass[a4paper]{amsart}
\usepackage[utf8]{inputenc}
\usepackage[T1]{fontenc}
\usepackage{amssymb}
\usepackage{amsmath}
\usepackage{amsthm}
\usepackage[english]{babel}
\usepackage[noadjust]{cite}
\usepackage[hang]{caption}
\usepackage{tikz}
\usepackage{tikz-cd} 
\usepackage{enumerate}
\usepackage{mathrsfs}
\usepackage{xcolor}
\usepackage{hyperref}
\usepackage{physics}
\usepackage{bbm}
\usepackage[normalem]{ulem}

\newcommand{\M}{\mathcal{M}}

\newcommand{\N}{\mathbb{N}}

\newcommand{\bn}{\mathbf{n}}

\newcommand{\MT}{\mathcal{M}_T}
\newcommand{\MC}{\mathscr{C}_T}
\newcommand{\MCe}{\mathscr{C}_T^{e}}

\newcommand{\MTe}{\mathcal{M}^e_T}

\renewcommand{\phi}{\varphi}
\newcommand{\eps}{\varepsilon}
\makeatletter
\hypersetup{
    colorlinks=true,
    linkcolor=black,
    citecolor=blue,
    urlcolor=black,
    pdftitle={Overleaf Example},
    pdfpagemode=FullScreen,
    }
\author[Sejal Babel]{Sejal Babel}
\address[Sejal Babel]{
Faculty of Mathematics and Computer Science, Jagiellonian University in Krakow, ul. \L o\-jasiewicza 6, 30-348 Krak\'ow, Poland
-- \and --
Doctoral School of Exact and Natural Sciences, Jagiellonian University, 
ul. \L o\-jasiewicza 11, 30-348 Krak\'ow, Poland
}\email{sejal.babel@doctoral.uj.edu.pl}

\author[Martha Łącka]{Martha Łącka}
\address[Martha Łącka]{
Faculty of Mathematics and Computer Science, Jagiellonian University in Krakow, ul. \L o\-jasiewicza 6, 30-348 Krak\'ow, Poland
}\email{martha.ubik@uj.edu.pl }

\theoremstyle{plain}
\newtheorem{thm}{Theorem}[section]
\newtheorem{lem}[thm]{Lemma}
\newtheorem{cor}[thm]{Corollary}

\theoremstyle{definition}
\newtheorem{defn}[thm]{Definition}

\DeclareMathOperator{\Gen}{Gen}
\makeatletter
\def\blfootnote{\gdef\@thefnmark{}\@footnotetext}

\title{On the Closedness of Ergodic Measures in a Characteristic Class}

\begin{document}
\begin{abstract}
We endow the set of all invariant measures of a topological dynamical system with a metric $\bar{\rho}$, which induces a topology stronger than the the weak$^*$-topology. Then, we study the closedness of ergodic measures within a characteristic class under this metric. Specifically, we show that if a sequence of generic points associated with ergodic measures from a fixed characteristic class converges in the Besicovitch pseudometric, then the limit point is generic for an ergodic measure in the same class. This implies that the set of ergodic measures belonging to a fixed characteristic class is closed in $\bar{\rho}$ (by a result of Babel, Can, Kwietniak, and Oprocha in \cite{Spec}).
\end{abstract}

\blfootnote{\textup{2020} \textit{Mathematics Subject Classification}: 37A05, 37Axx, 37Bxx}
\date{\today}
\maketitle

	\vspace{7mm}

	\textbf{keywords:} \emph{characteristic classes, Besicovitch pseudometric, joinings, generic points}
		\vspace{7mm}
        
\section{Introduction}

A collection of measure-preserving systems is called a \emph{characteristic class} if it is closed under the operations of taking factors and countable joinings. This notion was introduced in~\cite{KKPLT23} by Kanigowski, Kułaga-Przymus, Lemańczyk, and de la Rue to study the Veech conjecture about the M\"obius orthogonality, which implies the Sarnak conjecture. Some notable examples of characteristic classes are the collection of systems with discrete spectrum, systems with zero entropy, and rigid systems with a fixed rigidity sequence (see \cite{KKPLT23} for more examples of characteristic classes). 

Now consider a characteristic class $\mathscr{C}$ and a dynamical system $(X,T)$. Denote by $\MC(X)$, the family of all $T$-invariant measures in $\mathscr{C}$. Although the set of all $T$-invariant probability measures on $X$ forms a compact metric space with respect to the weak$^*$ topology, it is well known that usually $\MC(X)$ is not closed in weak$^*$ topology. This is because any metric generating the weak$^*$ topology fails to preserve the properties that defines a characteristic class. For instance weak$^*$-limit of a sequence of periodic measures on shift spaces can be a Bernoulli measure. Moreover, the entropy function in general is not weak$^*$-continuous. Thus, it is reasonable to consider a metric stronger than the metric inducing the weak$^*$ topology, which can preserve various desirable dynamical properties. One of such metrics is the Ornstein's $\bar d$-metric defined on the collection of shift-invariant measures on the symbolic space. 


In this paper, we consider the metric $\bar{\rho}$ which generalises Ornstein's $\bar d$-metric to compact spaces. More precisely, if $(X,T)$ is a topological dynamical system and $d$ is any compatible metric on $X$, then there is a complete metric $\bar{\rho}$ on the space of all $T$-invariant measures that introduces a topology stronger than the weak$^*$ topology. Furthermore, the topology induced by $\bar{\rho}$ does not depend on the choice of $d$. Finally, if $X=A^\infty$ is the symbolic space and $T=\sigma$ is the shift operator, then $\bar{\rho}$ is uniformly equivalent to Ornstein's $\bar{d}$ metric.

We establish that the set of all ergodic $T$-invariant probability measures belonging to a characteristic class is closed with respect to the metric $\bar{\rho}$. Specifically, we prove that if a sequence of generic points $(x_m)_{m\in\N}$ associated with a sequence of measures $(\mu_m)_{m\in\N}$, each belonging to a fixed characteristic class $\mathscr{C}$, converges to a point $x$ generic for a measure $\mu$ in the Besicovitch pseudometric, then $\mu$ belongs to~$\mathscr{C}$. Using the equivalence between the Besicovitch pseudometric on generic points and $\bar{\rho}$-metric on ergodic measures established by authors in \cite{Spec}, we conclude that set of all ergodic $T$-invariant probability measures in a fixed characteristic class is closed in $\bar{\rho}$-metric.



\section{Preliminaries}

\subsection{Notation}
Throughout this paper, a \emph{topological dynamical system} is a pair $(X,T)$, where $X$ is a compact metric space equipped with a compatible metric $d$ and $T\colon X\to X$ is a homeomorphism. For $x\in X$ and $\eps>0$ we denote by $B(x, \eps)$ the set $\{z\in X\,:\,d(z,x)<\eps\}$.
For a set $B\subset X$ and $\eps>0$, let $\partial B$ represent the boundary of $B$ and $B^{\eps}$ denote the $\eps$-hull of $B$, defined as the set $\{x\in X\,:\, \inf_{b\in B} d(x,b)<\eps\}$.
We denote by $\mathcal B(X)$ the Borel $\sigma$-algebra of $(X,d)$. For simplicity, we omit the $\sigma$-algebra from the notation, as we consider it to Borel $\sigma$-algebra throughout this work.

Given a dynamical system $(X,T)$, we denote by $\MT(X)$ the simplex of $T$-invariant Borel probability measures on $X$ and by $\MT^e(X)$ the subset of $\MT(X)$ consisting of ergodic measures. We also write $\mathcal M(X)$ for the family of all Borel probability measures on $X$. Given $x\in X$, let $\delta_x\in\mathcal M(X)$ be the Dirac measure supported at $x$. Fixing $\mu\in\MT(X)$, we obtain an invertible measure-preserving system $(X,\mu,T)$.

\subsection{Factors and Joinings}
A measure-preserving system $(Y,\nu,S)$ is called a \emph{factor} of $(X,\mu,T)$ if there exists a measurable map $\pi:X\to Y$ such that $\pi\circ T=S\circ \pi$ and $\pi^{*}(\mu)=\nu$. Let $I\subseteq \N$ be an at most countable set. Consider a family of measure-preserving systems $(X_i,\mu_i,T_i)_{i\in I}$. A \emph{joining} $\lambda$ of the sequence of measures $(\mu_i)_{i\in I}$ is a measure on the Cartesian product $\Pi_{i\in I}X_i$, whose marginals on $X_i$'s are $\mu_i$'s and 
is invariant under the map $\Pi_{i\in I}T_i$. By enumerating the index set $I$ with $\{1,2,\ldots\}$, we denote the set of all joinings of a sequence $(\mu_i)_{i\in I}$ by $J(\mu_1,\mu_2,\ldots) $ and the set of all ergodic joinings by $J^e(\mu_1,\mu_2,\ldots)$. 

The following theorem states that, in order to determine whether there exists a factor map between two measure-preserving systems, it is enough to check the existence of a special joining (see \cite[Proposition 2.7]{TDLR} for the proof).

\begin{thm}\label{thm:Factor}
Let $(X,T)$ and $(Y,S)$ be topological dynamical systems with $\mu\in \MT(X)$ and $\nu\in \mathcal{M}_{S}(Y)$. The system $(Y,\nu,S)$ is a factor of $(X,\mu,T)$ if and only if there exists a joining $\lambda\in J(\mu,\nu)$ such that
\[
\{X,\emptyset\}\otimes \mathcal{B}(Y)\subset \mathcal{B}(X)\otimes \{Y,\emptyset\} \mod \lambda.
\]
\end{thm}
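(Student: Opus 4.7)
The plan is to prove the two implications separately, since the result is a standard characterization of factors via joinings.

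For the \textbf{forward} direction, assume $\pi\colon X\to Y$ is a factor map. I would introduce the \emph{graph joining}
\[
\lambda := (\mathrm{id}_X\times\pi)_*\mu
\]
on $X\times Y$. Its first marginal is $\mu$ by construction and its second marginal is $\pi_*\mu = \nu$ by the definition of a factor; its invariance under $T\times S$ reduces to the intertwining relation $\pi\circ T = S\circ\pi$. Since $\lambda$ is concentrated on the graph $\{(x,\pi(x)): x\in X\}$, for each $B\in\mathcal{B}(Y)$ one checks
\[
\lambda\bigl((X\times B)\,\triangle\,(\pi^{-1}(B)\times Y)\bigr)=0,
\]
which identifies every generator $X\times B$ of $\{X,\emptyset\}\otimes\mathcal{B}(Y)$ with an element $\pi^{-1}(B)\times Y$ of $\mathcal{B}(X)\otimes\{Y,\emptyset\}$ modulo $\lambda$, yielding the required sub-$\sigma$-algebra inclusion.

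For the \textbf{backward} direction, given a joining $\lambda\in J(\mu,\nu)$ with the stated inclusion, I would disintegrate $\lambda$ over its first marginal. Because $Y$ is compact metric and hence standard Borel, there exists a $\mu$-measurable family $\{\lambda_x\}_{x\in X}$ of Borel probability measures on $Y$ such that $\lambda=\int \delta_x\otimes\lambda_x\,d\mu(x)$. The hypothesis means that for every $B\in\mathcal{B}(Y)$ the function $x\mapsto \lambda_x(B)$ agrees $\mu$-a.e.\ with the indicator of some set $A_B\in\mathcal{B}(X)$, so $\lambda_x(B)\in\{0,1\}$ for $\mu$-a.e.\ $x$. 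Applying this simultaneously to a countable base of $Y$ and intersecting the corresponding conull sets forces $\lambda_x$ to be a Dirac mass $\delta_{\pi(x)}$ for $\mu$-a.e.\ $x$, and the measurability of the disintegration transfers to $\pi$. The identity $\pi_*\mu=\nu$ is then read off from the second marginal of $\lambda$.

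The main obstacle, once $\pi$ has been produced, is upgrading it to a genuinely equivariant map. For this I would use the $T\times S$-invariance of $\lambda$: both $\lambda$ and $(T\times S)_*\lambda$ disintegrate over $\mu$, so uniqueness of the disintegration yields $\lambda_{Tx}=S_*\lambda_x$ for $\mu$-a.e.\ $x$, which in the Dirac setting reduces to $\delta_{\pi(Tx)}=\delta_{S\pi(x)}$, i.e.\ $\pi\circ T=S\circ\pi$ holds $\mu$-almost everywhere. Removing a $T$-invariant $\mu$-null set (for instance by redefining $\pi$ arbitrarily off a full-measure $T$-invariant Borel set) turns this into the required factor map, completing the proof.
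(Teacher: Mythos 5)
Your argument is correct and is the classical proof of this characterization: the graph joining $(\mathrm{id}_X\times\pi)_*\mu$ for the forward direction, and disintegration over the first marginal plus the $\{0,1\}$-law on a countable base (which, via a Dynkin/covering argument, forces each fibre measure to be a Dirac mass) for the converse. Note that the paper does not prove this statement at all; it is quoted with a citation to Proposition 2.7 of de la Rue's survey, whose proof is essentially the one you give, so there is nothing to contrast. The only point worth tightening is the last step: redefining $\pi$ off a $T$-invariant conull set makes the intertwining $\pi\circ T=S\circ\pi$ hold only $\mu$-almost everywhere (an arbitrary redefinition need not be equivariant on the null set), which is the standard convention for factors of measure-preserving systems and suffices here.
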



\subsection{Characteristic classes}
\begin{defn}\label{Characteristic class}
A collection of measure-preserving systems $\mathscr{C}$ is called a \emph{characteristic class} if it is closed under the operations of taking factors and (countable) joinings. 
\end{defn}
Let $\mathscr{C}$ be a characteristic class. We say that a measure $\mu\in \MT(X)$ belongs to $\mathscr{C}$ if the measure preserving system $(X,\mu,T)\in \mathscr{C}$. We denote by $\MC(X)$ the collection of all $\mu\in\MT(X)$ belonging to $\mathscr{C}$ and by $\MCe(X)$ the set of ergodic measures in $\MC(X)$.

\subsection{The Besicovitch pseudometric}
Let $\bn:=(n_k)_{k=1}^{\infty}\subset\N$ be an increasing sequence. A point $x\in X$ is called \emph{quasi-generic} along $\bn$ for $\mu\in \MT(X)$ if for every $f\in \mathcal C(X)$ one has 
\[
\frac{1}{n_k}\sum_{j=0}^{n_k-1}f(T^j(x))\to \int f d\mu \quad \text{as} \quad k\to \infty.
\]
If in addition one has $n_k=k$ for every $k\in\N$, then we say that $x$ is \emph{generic} for $\mu$. We denote the set of all generic points for a measure $\mu$ by $\text{Gen}(\mu)$.   
\begin{defn}
The \emph{Besicovitch pseudometric} along $\mathbf{n}$ is defined for $x,y\in X$ by
	\begin{equation}\label{def:D_B-along}
D^{\bn}_B(x,y)=\limsup_{k\to\infty}\frac{1}{n_k}\sum_{j=0}^{n_k-1}d(T^j(x),T^j(y)).
	\end{equation}	
\end{defn}
We will use the notation $D_B$ when $n_k=k$ for every $k\in\N$.
Furthermore, in~\eqref{defn:equivalent-pseudometric-along-subseq} we define an equivalent pseudometric to the Besicovitch pseudometric along~$\bn$ introduced in \eqref{def:D_B-along}. The definition in \eqref{defn:equivalent-pseudometric-along-subseq} is analogous to the one presented in \cite[Lemma 2]{KLO2}, but here the pseudometric is defined along a~subsequence $\bn$. The equivalence can be established following the similar lines of reasoning as those presented in \cite[Lemma 2]{KLO2}.
\begin{equation}\label{defn:equivalent-pseudometric-along-subseq}
        \tilde{D}_{B}^{\bn}(x,y)=\inf\left\{ \delta > 0\,:\,\bar{d}_{\bn}\left(\left\{ j\in \N \,:\, d\big(T^j(x),T^j(y)\big)\geq \delta\right \}\right)< \delta   \right\},
\end{equation} 
where $\bar{d}_{\bn}(S)$ denotes the upper asymptotic density of the set $S\subseteq\N$ along $\bn$, that is
\[\bar{d}_{\bn}(S)=\limsup\limits_{k\to\infty}\frac{\left|S\cap\{0,1,\ldots, n_k-1\}\right|}{n_k}.\]


 \subsection{The $\bar{\rho}$-metric}\label{DBweak*}
The metric $\bar{\rho}$ on the space of all $T$-invariant probability measures on $X$ was introduced in \cite{Spec}, but the $\bar d$ metric has been studied by many authors (see e.g. \cite{Glas03}, \cite{rudolph90}). In fact, the symbolic version of our main result can be deduced from \cite{BKLR19}. The relationship between the metric $\bar{\rho}$ on the space of invariant measures and the Besicovitch pseudometric along a subsequence was investigated in \cite{Spec}.

\begin{defn}
 Let $\mu,\nu\in \MT(X)$. The metric $\bar{\rho}$ is defined as
\begin{equation*}\label{defn:rho_bar_metric}
\bar{\rho}(\mu,\nu)=\inf_{\lambda\in J(\mu,\nu)}\int d(x,y)\,\text{d}\lambda(x,y).
\end{equation*}
\end{defn}
It is known that $\bar\rho$ induces a topology stronger than the weak$^*$ topology (see \cite{Spec}).

\section{Main results}

We state without proof the following lemma, which is an extension of the result from \cite[Proposition 8.2.8]{bogachev07}, where the lemma was proven for a single measure.  
\begin{lem}\label{lem:Generating_partition} 
Let $(\nu_k)_{k=1}^{\infty}\subseteq \M(X)$. Then there exists a countable basis $\beta$ for the topology of $(X,d)$ such that for every $B\in\beta$ one has
$\nu_k(\partial B)=0 \text { for every } k\in \N$.
\end{lem}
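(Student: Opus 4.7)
The plan is to reduce the countable-family statement to the single-measure case already established in \cite[Proposition 8.2.8]{bogachev07}. Set
\[
\nu \;=\; \sum_{k=1}^{\infty} 2^{-k}\,\nu_k.
\]
Since each $\nu_k$ is a Borel probability measure on the compact metric space $X$ and the series $\sum_k 2^{-k}$ converges, $\nu$ is a well-defined finite Borel measure on $X$. By construction, for every Borel set $B\subseteq X$ we have $2^{-k}\nu_k(B)\le\nu(B)$, so whenever $\nu(\partial B)=0$ we automatically obtain $\nu_k(\partial B)=0$ for every $k\in\N$.

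Next, I would apply the single-measure statement of \cite[Proposition 8.2.8]{bogachev07} to $\nu$: there exists a countable basis $\beta$ for the topology of $(X,d)$ consisting of sets whose boundary is $\nu$-null. Combined with the previous observation, every $B\in\beta$ satisfies $\nu_k(\partial B)=0$ for all $k\in\N$, which is precisely the conclusion.

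For completeness, I would recall the idea behind the single-measure version, in case the reader wants it in hand. Fix a countable dense set $\{x_n\}_{n\in\N}\subset X$. For each $n$ and each $R>0$, the family of spheres $\{\partial B(x_n,r):0<r<R\}$ is pairwise disjoint, so $\{r\in(0,R):\nu(\partial B(x_n,r))>0\}$ is at most countable (otherwise uncountably many disjoint sets would carry positive mass, contradicting finiteness of $\nu$). Hence one can select a dense sequence $\{r_{n,m}\}_{m\in\N}\subset(0,\infty)$, decreasing to $0$, such that $\nu(\partial B(x_n,r_{n,m}))=0$ for every $n,m$. The collection $\beta=\{B(x_n,r_{n,m}):n,m\in\N\}$ is then a countable basis with the required property.

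The only potential subtlety is verifying that $\beta$ is indeed a basis, but this is routine: given any open $U\subseteq X$ and $x\in U$, pick $x_n$ close enough to $x$ and $r_{n,m}$ small enough that $x\in B(x_n,r_{n,m})\subseteq U$; density of $\{x_n\}$ and of the radii makes this possible. No further work is needed, so there is no genuine obstacle beyond invoking the cited single-measure result on $\nu$.
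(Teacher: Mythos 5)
Your proof is correct, and it follows exactly the route the paper alludes to (the paper states this lemma without proof, noting only that it extends \cite[Proposition 8.2.8]{bogachev07} from one measure to countably many). The reduction via the dominating measure $\nu=\sum_k 2^{-k}\nu_k$ is the standard and intended argument, and your recollection of the single-measure case (disjointness of the spheres $\partial B(x_n,r)\subseteq\{z: d(z,x_n)=r\}$ forcing all but countably many radii to give $\nu$-null boundaries) is also accurate.
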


The proof of the following result follows from the proofs of \cite[Lemma 6]{KLO2} and \cite[Theorem 15]{KLO2}. The obvious changes must be taken into account are caused by the fact that the points in Theorem \ref{thm:Besicovitch-quasi-genericity-and-ergodicity} are quasi-generic, while those in \cite{KLO2} are generic. 
    \begin{thm}\label{thm:Besicovitch-quasi-genericity-and-ergodicity}
Let $\bn=(n_k)_{k=1}^\infty\subseteq\N$ be strictly increasing sequence of positive integers. If for every $m\ge 1$ the point $x_m\in X$ is quasi-generic along  $\bn$  for $\mu_m\in\MT(X)$  
and $x\in X$ is such that
\[
\lim_{m\to\infty}
D^{\bn}_B(x,x_m)=0,
\]
then $x$ is quasi-generic along $\bn$ for some $\mu\in\MT(X)$. Furthermore, the sequence $(\mu_m)_{m\in\N}$ converges to $\mu$ with respect to the weak$^*$ topology and if $\mu_m$ is ergodic for every $m\ge 1$, then so is $\mu$.
\end{thm}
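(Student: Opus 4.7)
The plan is to adapt \cite[Lemma 6]{KLO2} and \cite[Theorem 15]{KLO2}, which handle generic points, to the quasi-generic setting along $\bn$. The argument splits into two parts: first, establishing weak*-convergence of $(\mu_m)$ to some $\mu \in \MT(X)$ together with quasi-genericity of $x$ for $\mu$; and second, transferring ergodicity to the limit.

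For the first part, I would fix $f \in C(X)$ and exploit uniform continuity: given $\eps > 0$, choose $\delta > 0$ with $d(z,w) < \delta$ implying $|f(z) - f(w)| < \eps$. The equivalent pseudometric \eqref{defn:equivalent-pseudometric-along-subseq} guarantees that for all sufficiently large $m$ the set $S_m := \{j \in \N : d(T^j x, T^j x_m) \ge \min(\delta, \eps)\}$ satisfies $\bar{d}_{\bn}(S_m) < \min(\delta, \eps)$. Splitting the empirical difference along $S_m$ and its complement yields
\[
\limsup_{k \to \infty} \left|\frac{1}{n_k}\sum_{j=0}^{n_k-1}\bigl(f(T^j x) - f(T^j x_m)\bigr)\right| \le \eps\bigl(1 + 2\|f\|_\infty\bigr).
\]
Combining this with quasi-genericity of $x_m$ along $\bn$ and a triangle inequality shows that $\bigl(\int f \, d\mu_m\bigr)_m$ is Cauchy in $\R$ for every $f \in C(X)$. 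Separability of $C(X)$ then gives $\mu_m \to \mu$ weak* for some $\mu \in \MT(X)$ (invariance passes to weak*-limits), and the same estimate forces $\tfrac{1}{n_k}\sum_{j=0}^{n_k-1} f(T^j x) \to \int f \, d\mu$, proving $x$ is quasi-generic along $\bn$ for $\mu$.

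The main obstacle lies in the second part, preserving ergodicity. I would first pass from Besicovitch to $\bar{\rho}$ closeness using the empirical joinings $\lambda^m_k := \tfrac{1}{n_k}\sum_{j=0}^{n_k-1} \delta_{(T^j x, T^j x_m)}$: extracting a weak*-convergent subsequence via Prokhorov on the compact space $X \times X$, the limit $\lambda^m$ is a $(T \times T)$-invariant joining in $J(\mu, \mu_m)$ (marginals follow from quasi-genericity of $x$ and $x_m$, and invariance from the $O(1/n_k)$ telescoping of the empirical measure), while continuity of $d$ yields $\int d \, d\lambda^m \le D^{\bn}_B(x, x_m)$, so $\bar{\rho}(\mu, \mu_m) \to 0$. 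Suppose, for contradiction, that $\mu$ is not ergodic, and write $\mu = \alpha \nu_1 + (1-\alpha)\nu_2$ with distinct ergodic $\nu_i$ and $\alpha \in (0,1)$. For any $\lambda \in J(\mu, \mu_m)$, its $(T \times T)$-ergodic decomposition has components whose first marginals are $T$-ergodic (since $T$-invariant sets $A \subseteq X$ lift to $(T \times T)$-invariant cylinders $A \times X$), so grouping components by first marginal produces $\lambda = \alpha \lambda_1 + (1-\alpha)\lambda_2$ with $\pi_1^*\lambda_i = \nu_i$; extremality of $\mu_m$ then forces $\pi_2^*\lambda_i = \mu_m$, hence $\lambda_i \in J(\nu_i, \mu_m)$. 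Taking infimum over $\lambda$ yields $\bar{\rho}(\mu, \mu_m) \ge \alpha \bar{\rho}(\nu_1, \mu_m) \ge \alpha\bigl(\bar{\rho}(\nu_1,\mu) - \bar{\rho}(\mu,\mu_m)\bigr)$, so $(1+\alpha)\bar{\rho}(\mu,\mu_m) \ge \alpha \bar{\rho}(\nu_1, \mu) > 0$, contradicting $\bar{\rho}(\mu, \mu_m) \to 0$.
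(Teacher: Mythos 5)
Your argument is essentially correct, and it is worth noting that the paper itself offers no proof of this theorem: it only points to \cite[Lemma 6]{KLO2} and \cite[Theorem 15]{KLO2} and says the quasi-generic case requires "obvious changes." Your first part (uniform continuity plus the density form $\tilde{D}_B^{\bn}$ of the pseudometric, splitting the Birkhoff averages over $S_m$ and its complement, deducing that $\bigl(\int f\,d\mu_m\bigr)_m$ is Cauchy and that the averages of $f$ along the orbit of $x$ converge to the same limit) is exactly the template of the cited lemma transported to subsequences, and it is complete. For the ergodicity part you take the classical Ornstein-type route: build $\lambda^m\in J(\mu,\mu_m)$ as a weak$^*$ limit of the empirical joinings to get $\bar{\rho}(\mu,\mu_m)\le D_B^{\bn}(x,x_m)\to 0$, then use the fact that the $(T\times T)$-ergodic components of any $\lambda\in J(\mu,\mu_m)$ have ergodic first marginals and, by extremality of the ergodic $\mu_m$, second marginal equal to $\mu_m$. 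This is a legitimate and self-contained alternative, and it has the side benefit of proving $\bar{\rho}(\mu,\mu_m)\to 0$, which is stronger than the weak$^*$ convergence claimed in the statement and is in the spirit of the rest of the paper (Theorem 3.3 runs a very similar joining construction).

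One step needs repair as written: a non-ergodic $\mu$ cannot in general be written as $\alpha\nu_1+(1-\alpha)\nu_2$ with $\nu_1,\nu_2$ \emph{ergodic}, since the ergodic decomposition may be non-atomic; in that case "grouping components by first marginal" into the two fibers over $\nu_1$ and $\nu_2$ would produce groups of measure zero. The fix is already implicit in your own argument: take a $T$-invariant Borel set $A$ with $0<\mu(A)=\alpha<1$ and let $\nu_1,\nu_2$ be the normalized restrictions of $\mu$ to $A$ and $A^c$; these are distinct $T$-invariant (not necessarily ergodic) measures. Since each ergodic component of $\lambda$ has ergodic first marginal, that marginal gives $A$ measure $0$ or $1$, so grouping the components of $\lambda$ by this dichotomy yields $\lambda=\alpha\lambda_1+(1-\alpha)\lambda_2$ with $\pi_1^*\lambda_i=\nu_i$, and extremality of $\mu_m$ still forces $\pi_2^*\lambda_i=\mu_m$. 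The concluding inequality $(1+\alpha)\bar{\rho}(\mu,\mu_m)\ge\alpha\bar{\rho}(\nu_1,\mu)>0$ only uses $\nu_1\ne\mu$ and the fact that $\bar{\rho}$ is a genuine metric on $\MT(X)$, so the contradiction survives unchanged.
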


The following theorem is an extension of \cite[Lemma 3.17]{BKLR19}. The authors in \cite{BKLR19} prove the result in the case of symbolic spaces.

\begin{thm}\label{joining_thm}
 Let $(X,T)$ be a topological dynamical system. Suppose there exist $(x_m)_{m=1}^{\infty}\subseteq X$, $x\in X$, $(\mu_m)_{m=1}^{\infty}\subseteq \MT(X)$,  and $\bn=(n_k)_{k=1}^{\infty}\subset\N$ such that 
 \begin{enumerate}[(i)]
     \item for every $m\in\N$ the point $x_m$ is quasi-generic along $\bn$ for $\mu_m$,
     \item $D_B^{\bn}(x_m,x)\to 0$ as $m\to \infty$.
 \end{enumerate}
Then $x\in X$ is quasi-generic along $\bn$ for some measure $\mu\in\MT(X)$ and there exists a joining $\nu\in J(\mu_1,\mu_2,....)$ such that $(X,T,\mu)$ is a factor of $(X^{\infty},T^{\infty},\nu)$. Furthermore, if $\mu_m$ is ergodic for every $m\geq 1$, then one can require $\nu$ to be ergodic.
\end{thm}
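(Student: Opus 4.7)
First I would invoke Theorem~\ref{thm:Besicovitch-quasi-genericity-and-ergodicity} with the data $(x_m)$, $(\mu_m)$, $x$, and $\bn$ to obtain the limit measure $\mu\in\MT(X)$ such that $x$ is quasi-generic for $\mu$ along $\bn$; the same theorem delivers ergodicity of $\mu$ whenever every $\mu_m$ is ergodic. What remains is to construct a joining of the $(\mu_m)$ that realises $(X,T,\mu)$ as a factor.

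To build this joining, I would work on the compact metrisable product $\bar X:=X^{\infty}\times X$ with the homeomorphism $\bar T:=T^{\infty}\times T$ and the distinguished point $\bar x:=(x_1,x_2,\ldots\,;\,x)$, and form the empirical averages
\[
\tilde\nu_k:=\frac{1}{n_k}\sum_{j=0}^{n_k-1}\delta_{\bar T^{j}\bar x}.
\]
Weak-$*$ compactness yields a subsequence $(k_\ell)$ along which $\tilde\nu_{k_\ell}$ converges to some $\tilde\nu\in\cM(\bar X)$, and a standard computation shows $\tilde\nu$ is $\bar T$-invariant. Continuity of each coordinate projection combined with hypothesis~(i) and the quasi-genericity of $x$ for $\mu$ identifies the marginals of $\tilde\nu$ on the $m$-th copy of $X$ (for $m\ge 1$) as $\mu_m$, and on the last copy as $\mu$. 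Letting $\nu$ be the projection of $\tilde\nu$ onto $X^{\infty}$ gives $\nu\in J(\mu_1,\mu_2,\ldots)$, and $\tilde\nu$ itself becomes a joining of $\nu$ with $\mu$.

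The factor statement then reduces, by Theorem~\ref{thm:Factor}, to showing that the $\sigma$-algebra of the last coordinate is contained modulo $\tilde\nu$ in the $\sigma$-algebra of the first infinitely many coordinates. This is where the Besicovitch hypothesis enters: from
\[
\int d(y_m,y)\,d\tilde\nu_k(y_1,y_2,\ldots,y)=\frac{1}{n_k}\sum_{j=0}^{n_k-1}d(T^jx_m,T^jx),
\]
boundedness of $d$, and weak-$*$ convergence along $(k_\ell)$, I obtain $\int d(y_m,y)\,d\tilde\nu\le D_B^{\bn}(x_m,x)$. By hypothesis~(ii) the right-hand side vanishes as $m\to\infty$, so $y_m\to y$ in $L^1(\tilde\nu)$, and an a.e.\ convergent subsequence exhibits $y$ as a $\tilde\nu$-a.e.\ limit of $\sigma(y_1,y_2,\ldots)$-measurable functions. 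This is exactly the hypothesis of Theorem~\ref{thm:Factor}.

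For the ergodic refinement I would pass to the ergodic decomposition $\tilde\nu=\int\tilde\nu_\omega\,dP(\omega)$ on $(\bar X,\bar T)$. Since projections of ergodic measures are ergodic, each coordinate marginal of $\tilde\nu_\omega$ is a $T$-invariant ergodic measure, and integrating recovers the corresponding marginal of $\tilde\nu$. Ergodicity of each $\mu_m$ and of $\mu$ now forces, by uniqueness of the ergodic decomposition of an ergodic measure, that every coordinate marginal of $\tilde\nu_\omega$ coincides with the prescribed one for $P$-a.e.\ $\omega$; countably many exceptional null sets union to a null set, so all these identifications hold simultaneously for $P$-a.e.\ $\omega$. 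Using a countable generator for $\mathcal B(X)$, the $\sigma$-algebra inclusion from the previous paragraph likewise transfers to $P$-a.e.\ component, so any typical $\tilde\nu_\omega$ is an ergodic joining inducing the required factor map via Theorem~\ref{thm:Factor}. The main obstacle I anticipate is precisely this last simultaneous bookkeeping: maintaining a countable list of ``for $P$-a.e.\ $\omega$'' statements---one per marginal and one for the $\sigma$-algebra inclusion---on a common full-measure set, which ultimately relies on extremality of ergodic measures in $\MT(X)$ and the separability of $\mathcal B(X)$.
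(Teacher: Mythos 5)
Your proposal is correct, and its central technical step takes a genuinely different route from the paper's. Both arguments begin identically: invoke Theorem~\ref{thm:Besicovitch-quasi-genericity-and-ergodicity} for the existence and ergodicity of $\mu$, build the empirical measures of the orbit of $(x,x_1,x_2,\ldots)$ under $T\times T^\infty$, extract a weak$^*$ accumulation point $\bar\nu\in J(\mu,\mu_1,\mu_2,\ldots)$, and reduce the factor claim to the $\sigma$-algebra inclusion of Theorem~\ref{thm:Factor}. Where the paper verifies that inclusion set by set --- choosing via Lemma~\ref{lem:Generating_partition} a countable basis $\beta$ whose elements have $\mu$- and $\mu_m$-null boundaries, estimating $\bar\nu\big((B\times X^\infty)\Delta(X\times A_N)\big)$ through the Portmanteau theorem, an $\eps$-hull of $\partial B$, and the density formulation $\tilde D_B^{\bn}$, and then closing up in the complete measure algebra --- you instead integrate the metric itself: the bound $\int d(y_m,y)\,d\tilde\nu\le D_B^{\bn}(x_m,x)$ (valid because $(y_1,y_2,\ldots,y)\mapsto d(y_m,y)$ is bounded continuous and the weak$^*$ limit is taken along a subsequence of $\bn$) gives $L^1(\tilde\nu)$-convergence of the coordinate $y_m$ to $y$, and an a.e.\ convergent subsequence exhibits the last coordinate as a.e.\ equal to a $\mathcal B(X^\infty)\otimes\{X,\emptyset\}$-measurable map, which is precisely the required inclusion mod $\tilde\nu$. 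This bypasses Lemma~\ref{lem:Generating_partition}, the boundary-regularity bookkeeping, and the appeal to completeness of the measure algebra, at the cost of nothing; it is arguably the cleaner argument. For the ergodic refinement the two proofs are essentially the same extremality argument: the paper decomposes $\nu$ and restricts the decomposition to the sub-$\sigma$-algebra $\mathcal C$ representing the factor, while you decompose $\tilde\nu$ and transfer the marginal identities and the $\sigma$-algebra inclusion to $P$-a.e.\ component; the ``simultaneous bookkeeping'' you flag as a potential obstacle is genuinely harmless, being a countable union of null sets followed by the observation that the class of $B\in\mathcal B(X)$ satisfying the inclusion mod $\tilde\nu_\omega$ is a $\sigma$-algebra containing a generator (alternatively, Fatou applied to $\int d(y_m,y)\,d\tilde\nu=\int\!\big(\int d(y_m,y)\,d\tilde\nu_\omega\big)dP(\omega)$ lets you rerun your $L^1$ argument directly on a.e.\ component).
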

\begin{proof}
The fact that $x$ is quasi-generic along $\bn$ for some measure $\mu\in\MT(X)$ follows from Theorem~\ref{thm:Besicovitch-quasi-genericity-and-ergodicity}.
To prove the existence of a joining with desired properties consider the point $\hat{x}:=(x,x_1,x_2,.....)\in X\times X^{\infty}$. Define a sequence $(\xi_k)_{k=1}^{\infty}\subseteq \mathcal{M}(X\times X^\infty)$ by setting 
\[
\xi_k=\frac{1}{n_k}\sum_{j=0}^{n_k-1}\delta_{(T\times T^{\infty})^{j}(\hat{x})}\text{ for every } k\geq 1.
\]
 Let $\bar{\nu}\in \mathcal{M}_{(T\times T^{\infty})} (X\times X^{\infty})$ be an accumulation point of the sequence $(\xi_k)_{k=1}^{\infty}$ with respect to the weak$^*$ topology (such a point exists as $\mathcal{M}(X\times X^\infty)$ is compact). Since $x$ is quasi-generic for $\mu$ along $\bn$ and for every $m\geq 1$ the point $x_m$ is quasi-generic for $\mu_m$ along $\bn$, we get that $\bar{\nu}\in J(\mu,\mu_1,\mu_2,...)$. By Theorem \ref{thm:Factor}, to prove that $(X,T,\mu)$ is  a factor of  $(X^\infty,T^\infty,\nu)$ where $\nu$ is the marginal of $\bar{\nu}$ on $X^{\infty}$, it is sufficient to prove that 
\begin{equation}\label{eq:sigma_algebra}
\mathcal{B}(X)\otimes \{X^\infty,\emptyset\}\subset \{X,\emptyset\}\otimes \mathcal{B}(X^\infty) \mod \bar{\nu}.
\end{equation}
Using Lemma \ref{lem:Generating_partition} we obtain a basis $\beta$ generating the Borel $\sigma$-algebra of $X$ such that for every $B\in \beta$ and every $m\in\N$ we have $\mu(\partial B)=\mu_m(\partial B)=0$. In order to prove \eqref{eq:sigma_algebra} it suffices to show that for every $B\in\beta$ there exists $A\in \mathcal B(X^\infty)$ such that $\bar{\nu}((B\times X^\infty)\Delta (X\times A))=0$. 
To this end fix $B\in\mathcal\beta$.
For every $n\geq 1$ define $A_n\in \mathcal B(X^{\infty})$ as 
\[
A_n:=(\underbrace{X\times X\times...\times X}_{n-1\text{ times }}\times B\times X\ldots).
\]
Note that $\partial(\partial B)\subseteq \partial B$ and so $\mu(\partial(\partial B))=0$. Therefore it follows from 
the Portmanteau theorem that \[\frac{1}{n_k}\sum_{i=0}^{n_k-1}\delta_{T^i(x)}(\partial B)\to\mu(\partial B)=0\text{ as }k\to\infty.\]
Fix $\eps>0$. There exists $\delta<\eps/2$ such that
\[
\limsup_{k\to \infty}\frac{1}{n_k}\left\lvert \{j\in \{0,1,\ldots, n_k-1\}\,:\, T^j(x)\in (\partial B)^{\delta}\}\right\rvert<\frac{\eps}{2}. 
\]
By our assumptions there exists $N\in \N$ such that $\tilde{D}_{B}^{\bn}(x,x_N)<\delta$, that is
\[
\bar{d}_{\bn}\left(\{ j\in \N \mid d(T^j(x),T^j(x_N))\geq \delta \}\right)< \delta.
\]
Now, let $\pi_N:X\times X^\infty\to X\times X$ denote the projection 
\[
\pi_N(y,y_1,y_2,.....y_N,....):=(y,y_N)
\]
and $\nu_N$ be the corresponding push-forward measure of $\bar{\nu}$. One has  
\begin{multline*}
\bar{\nu}\big((B\times X^\infty)\Delta(X\times A_{N})\big)=\nu_N((B\times X)\Delta (X\times B))=\\
=\lim_{k\to \infty}\frac{1}{n_k}\left\lvert\left\{j\in \{0,1,\ldots, n_k-1\}\,:\, \big(T^j(x),T^j(x_N)\big)\in (B\times B^c)\cup (B^c\times B)\right\}\right\rvert \\
\leq \limsup_{k\to \infty}\frac{1}{n_k}\left\lvert\{j\in \{0,1,\ldots, n_k-1\}\,:\, d(T^j(x),T^j(x_N))\geq \delta \}\right\rvert \\
+\limsup_{k\to \infty}\frac{1}{n_k}\lvert \{j\in \{0,1,\ldots, n_k-1\}\,:\, T^j(x)\in (\partial B)^{\delta}\}\rvert <\delta+\frac{\eps}{2}<\eps.
 \end{multline*}
Since $\eps>0$ is arbitrary and the $\sigma$-algebra under consideration forms a complete metric space with the metric $\hat{\rho}$ given by $\hat\rho(C,D)=\bar{\nu}(C\Delta D)$ (refer \cite[Theorem 1.12.6]{bogachev07} for details), there exists a set $A\in \mathcal{B}(X^\infty)$ such that $\bar{\nu}((B\times X^\infty) \Delta (X\times A))=0$. This establishes that $(X,T,\mu)$ is a factor of $(X^\infty, T^\infty, \nu)$.

For furthermore part, let $\mathcal{C}$ be a $T^\infty$-invariant sub $\sigma$-algebra of $\mathcal{B}(X^\infty)$ representing $(X,T,\mu)$ and let $\Lambda$ denote the ergodic decomposition of $\nu$. By restricting $\Lambda$ to the sub $\sigma$-algebra $\mathcal{C}$, we obtain 
\[
\mu=\nu_{\lvert_{\mathcal{C}}}=\int \lambda_{\lvert_{\mathcal{C}}} d(\Lambda(\lambda)).
\]
Since $\mu$ is ergodic (by Theorem \ref{thm:Besicovitch-quasi-genericity-and-ergodicity}) and the ergodic decomposition is unique, it follows that $\lambda_{\lvert \mathcal{C}}= \mu$ for $\Lambda$-almost every $\lambda$. In other words, $(X,T,\mu)$ is a factor of almost all ergodic components of $(X^\infty,T^\infty,\nu)$. To finish the proof note that due to the ergodicity of $\mu_m$ for $m\geq 1$, each such ergodic component belongs to $J^e(\mu_1,\mu_2,....)$.  
Therefore, we can replace $\nu$ by any of such components.
\end{proof}

 \begin{cor}
Let $\bn\subset\N$ be an increasing sequence, $\mathscr{C}$ be a characteristic class, and
\[
\Gen(\mathscr{C})=\big\{x\in \Gen(\mu) : \mu\in\MC(X)\big\}
\]
be the collection of generic points for measures in $\mathscr{C}$. Then $\Gen(\mathscr{C})$ is closed with respect to $D_B$.
\end{cor}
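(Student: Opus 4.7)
The statement is essentially the closure-under-factors-and-joinings content of the characteristic class definition, pulled through Theorem~\ref{joining_thm}. The plan is as follows.

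First I would fix a sequence $(x_m)_{m=1}^\infty \subseteq \Gen(\mathscr{C})$ with $D_B(x_m, x) \to 0$ for some $x \in X$, and, for each $m$, pick a measure $\mu_m \in \MC(X)$ with $x_m \in \Gen(\mu_m)$. The objective is to produce a single measure $\mu \in \MC(X)$ such that $x \in \Gen(\mu)$. Taking $\bn = (k)_{k=1}^\infty$ so that ``quasi-generic along $\bn$'' coincides with ``generic'', hypotheses (i) and (ii) of Theorem~\ref{joining_thm} are immediately satisfied.

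Next I would invoke Theorem~\ref{joining_thm} directly. It furnishes a measure $\mu \in \MT(X)$ for which $x$ is generic, a countable joining $\nu \in J(\mu_1, \mu_2, \ldots)$, and a factor map realising $(X, T, \mu)$ as a factor of $(X^\infty, T^\infty, \nu)$. At this point the two closure operations of Definition~\ref{Characteristic class} do all the work: since each $\mu_m$ lies in $\mathscr{C}$ and $\mathscr{C}$ is closed under countable joinings, the joining $\nu$ belongs to $\mathscr{C}$; since $\mathscr{C}$ is also closed under factors, $\mu$ belongs to $\mathscr{C}$ as well. Thus $\mu \in \MC(X)$ and $x \in \Gen(\mu) \subseteq \Gen(\mathscr{C})$, proving $D_B$-closedness.

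There is essentially no obstacle left after Theorem~\ref{joining_thm} has been established; the only point that deserves a line of justification in the write-up is the reduction from the Besicovitch pseudometric $D_B$ (i.e.\ the case $n_k = k$) to the ``quasi-generic along $\bn$'' hypothesis of that theorem, which is trivial once one observes that for this choice of $\bn$ both notions of genericity and both Besicovitch pseudometrics coincide. The index sequence $\bn$ appearing in the statement of the corollary is therefore not used in the argument and can either be suppressed or, equivalently, the same proof applied verbatim along any fixed $\bn$ would give the stronger conclusion that $\Gen(\mathscr{C})$ is closed under $D_B^{\bn}$ when one replaces $\Gen$ by quasi-genericity along $\bn$.
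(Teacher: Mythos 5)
Your proposal is correct and is exactly the argument the paper intends: the corollary is stated without proof immediately after Theorem~\ref{joining_thm}, and it follows by taking $n_k=k$, applying that theorem to get $x\in\Gen(\mu)$ with $(X,T,\mu)$ a factor of a countable joining of the $\mu_m$'s, and then using both closure properties in Definition~\ref{Characteristic class}. Your side remark that the sequence $\bn$ in the corollary's statement plays no role (and that the same argument gives the quasi-generic analogue along any fixed $\bn$) is also accurate.
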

In \cite[Corollary 7.6]{Spec} the authors establish the following relationship between the Besicovitch pseudometric
for quasi-generic points and the metric $\bar{\rho}$ between the corresponding measures. 
\begin{lem}\label{Besicovitch_rho_bar}
    Let $(\mu_m)_{m=1}^{\infty}\subseteq \MTe(X)$ be a sequence of ergodic measures such that $\bar{\rho}(\mu_m,\mu)\to 0$ as $m\to \infty$ for some $\mu\in \MT(X)$. Then, there exist $(x_m)_{m=1}^{\infty}\subseteq X$, $x\in X$ and an increasing sequence $\bn\subset \N$ such that for every $m\geq 1$ the point $x_m$ is quasi-generic along $\bn$ for $\mu_m$ and \[
        \lim_{m\to \infty}D_B^{\bn}(x_m,x)= 0.
    \]    
\end{lem}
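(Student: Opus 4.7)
The approach is to bundle all the $\mu_m$'s and $\mu$ into a single ergodic joining on $X^{\N}\times X$, and then read off the desired points $x_m$ and $x$ simultaneously from one Birkhoff-generic orbit. This will actually yield genericity along $\bn=(k)_{k=1}^{\infty}$, which is slightly stronger than the quasi-genericity demanded by the statement.

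\textbf{Construction of the joining.} After extracting a subsequence, assume $\bar\rho(\mu_m,\mu)<2^{-m}$ for every $m$. Since $J(\mu_m,\mu)$ is weak$^*$-compact and $\lambda\mapsto\int d\,d\lambda$ is continuous, fix an optimiser $\eta_m\in J(\mu_m,\mu)$. Disintegrate each $\eta_m$ against its second marginal $\mu$ to obtain kernels $K_m$ with $\eta_m(dx_m,dx)=K_m(x,dx_m)\,\mu(dx)$; the $(T\times T)$-invariance of $\eta_m$ combined with the $T$-invariance of $\mu$ forces the $\mu$-a.e.\ equivariance $K_m(Tx,\cdot)=T_\ast K_m(x,\cdot)$. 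Kolmogorov extension now produces a Borel probability measure
\[
\zeta(dx_1,dx_2,\ldots,dx):=\mu(dx)\,\prod_{m=1}^{\infty}K_m(x,dx_m)
\]
on $X^{\N}\times X$ which is $(T^{\infty}\times T)$-invariant, has $m$-th marginal $\mu_m$, last marginal $\mu$, and joint $(m,\infty)$-marginal $\eta_m$, so that $\int d(x_m,x)\,d\zeta=\bar\rho(\mu_m,\mu)<2^{-m}$ for all $m$.

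\textbf{Extraction of the points.} Decompose $\zeta=\int\zeta^\ast\,d\Lambda(\zeta^\ast)$ into ergodic components. Since each $\mu_m$ is ergodic (an extreme point of $\MT(X)$), $\Lambda$-almost every $\zeta^\ast$ has $m$-th marginal equal to $\mu_m$ for every $m$. Summability $\sum_m\int d(x_m,x)\,d\zeta<\infty$ combined with Tonelli yields
\[
\int\sum_{m=1}^{\infty}\int d(x_m,x)\,d\zeta^\ast\,d\Lambda(\zeta^\ast)<\infty,
\]
so for $\Lambda$-a.e.\ $\zeta^\ast$ we have $\int d(x_m,x)\,d\zeta^\ast\to0$ as $m\to\infty$. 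Fix one such $\zeta^\ast$ and apply Birkhoff's ergodic theorem: for $\zeta^\ast$-a.e.\ $(x_1,x_2,\ldots,x)\in X^{\N}\times X$, the coordinate $x_m$ is generic for $\mu_m$ and
\[
D_B(x_m,x)=\lim_{n\to\infty}\frac{1}{n}\sum_{j=0}^{n-1}d(T^jx_m,T^jx)=\int d(x_m,x)\,d\zeta^\ast\xrightarrow[m\to\infty]{}0,
\]
proving the lemma with $\bn=(k)_{k=1}^{\infty}$.

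\textbf{Main obstacle.} The delicate step is the construction of $\zeta$: one needs to select the disintegration kernels $K_m$ in a jointly $T$-equivariant fashion (modulo $\mu$-null sets), and verify that the resulting infinite relatively-independent product is well-defined, $(T^{\infty}\times T)$-invariant, and carries the prescribed two-dimensional marginals $\eta_m$. These facts are standard consequences of regular conditional probability theory on a compact Polish space, but require handling countably many simultaneous null sets with care. Once $\zeta$ is in hand, ergodic decomposition, Tonelli, and Birkhoff's theorem finish the argument mechanically.
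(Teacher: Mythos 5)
Your joining-theoretic argument is a genuinely different route from the one the paper relies on: the paper does not prove this lemma itself but imports it from \cite[Corollary 7.6]{Spec}, where (as the appearance of the auxiliary sequence $\bn$ in the statement suggests) the quasi-generic points are produced by a more hands-on construction. The core of your argument is sound. Optimal couplings $\eta_m\in J(\mu_m,\mu)$ exist by weak$^*$-compactness of $J(\mu_m,\mu)$ and continuity of $\lambda\mapsto\int d\,\text{d}\lambda$; the infinite relatively independent joining $\zeta$ over the common last coordinate is a standard construction and has the marginals you claim; almost every ergodic component $\zeta^\ast$ has $m$-th marginal $\mu_m$ because factors of ergodic systems are ergodic and the ergodic decomposition of $\mu_m$ is trivial; and a $\zeta^\ast$-generic point of the compact metric space $X^{\N}\times X$ yields coordinates $x_m\in\Gen(\mu_m)$ with $D_B(x_m,x)=\int d(x_m,x)\,\text{d}\zeta^\ast$. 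Where it applies, your argument even gives genericity ($n_k=k$) rather than quasi-genericity, and shows as a by-product that $\mu$ must itself be ergodic.

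The genuine gap is the opening sentence ``after extracting a subsequence, assume $\bar{\rho}(\mu_m,\mu)<2^{-m}$.'' The lemma asks for a point $x_m$ for \emph{every} $m\geq 1$, with $D_B^{\bn}(x_m,x)\to 0$ along the full sequence; proving the conclusion only for a subsequence of $(\mu_m)$ is strictly weaker, and nothing in your write-up upgrades it. The subsequence is not cosmetic: it is exactly what makes $\sum_m\int d(x_m,x)\,\text{d}\zeta<\infty$, and without it you only know that $\zeta^\ast\mapsto\int d(x_m,x)\,\text{d}\zeta^\ast$ tends to $0$ in $L^1(\Lambda)$, which does not produce a single ergodic component on which it tends to $0$ for all $m$ (Fatou only gives $\liminf=0$ almost everywhere, with an $m$-subsequence depending on $\zeta^\ast$). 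Closing the gap would require an extra step — for instance, producing for each omitted index $m$ a point $x_m\in\Gen(\mu_m)$ with $D_B(x_m,x)$ controlled by $\bar{\rho}(\mu_m,\mu)$ for the already-chosen $x$ — and that is close to the content of the cited result itself. That said, the subsequence version you do prove suffices for the paper's application, Corollary~\ref{Cor:Characteristic_rho_bar}.
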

Using Lemma \ref{Besicovitch_rho_bar} we translate Theorem \ref{joining_thm} to the convergence of measures with respect to $\bar{\rho}$, proving that the set of ergodic measures from a fixed characteristic class forms a closed set with respect to $\bar{\rho}$.
\begin{cor}\label{Cor:Characteristic_rho_bar}
For every characteristic class $\mathscr{C}$ the set
$\MCe$ is closed with respect to the metric $\bar{\rho}$.
\end{cor}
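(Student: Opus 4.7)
The plan is to combine the three tools developed earlier in the paper in essentially a one-line chain: Lemma~\ref{Besicovitch_rho_bar} converts $\bar{\rho}$-convergence into Besicovitch convergence of quasi-generic points, Theorem~\ref{joining_thm} upgrades that to an ergodic joining exhibiting the limit as a factor, and finally the defining closure properties of a characteristic class finish the argument.

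More concretely, I would start with a sequence $(\mu_m)_{m=1}^\infty\subseteq \MCe(X)$ and assume $\bar{\rho}(\mu_m,\mu)\to 0$ for some $\mu\in\MT(X)$. I have to show $\mu\in\MCe(X)$, i.e.\ $\mu$ is ergodic and belongs to $\mathscr{C}$. Applying Lemma~\ref{Besicovitch_rho_bar} yields an increasing sequence $\bn\subseteq\N$, points $x_m,x\in X$ with $x_m$ quasi-generic along $\bn$ for $\mu_m$, and $D_B^{\bn}(x_m,x)\to 0$. This puts me exactly in the hypotheses of Theorem~\ref{joining_thm} (and of Theorem~\ref{thm:Besicovitch-quasi-genericity-and-ergodicity}). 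Hence $x$ is quasi-generic along $\bn$ for some $\mu'\in\MT(X)$; since $\mu_m\to\mu$ in weak$^*$ (as $\bar{\rho}$ dominates the weak$^*$ topology) and also $\mu_m\to\mu'$ in weak$^*$ by the ``furthermore'' part of Theorem~\ref{thm:Besicovitch-quasi-genericity-and-ergodicity}, uniqueness of weak$^*$-limits forces $\mu=\mu'$. Ergodicity of $\mu$ is then automatic from the last sentence of Theorem~\ref{thm:Besicovitch-quasi-genericity-and-ergodicity}, since every $\mu_m$ was assumed ergodic.

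For the membership in $\mathscr{C}$, Theorem~\ref{joining_thm} supplies an \emph{ergodic} joining $\nu\in J^e(\mu_1,\mu_2,\ldots)$ such that $(X,T,\mu)$ is a factor of $(X^\infty,T^\infty,\nu)$. Because each $(X,\mu_m,T)$ lies in $\mathscr{C}$ and $\mathscr{C}$ is closed under countable joinings, we have $(X^\infty,\nu,T^\infty)\in\mathscr{C}$. Because $\mathscr{C}$ is also closed under factors, $(X,\mu,T)\in\mathscr{C}$, i.e.\ $\mu\in\MC(X)$. Combined with the ergodicity established in the previous step, this gives $\mu\in\MCe(X)$, which is what we wanted.

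There is no serious obstacle here: all the analytic content has been packaged into Lemma~\ref{Besicovitch_rho_bar} and Theorem~\ref{joining_thm}. The only point worth stating carefully is \emph{why} the joining produced by Theorem~\ref{joining_thm} can be assumed ergodic, since this is precisely what lets us invoke the countable-joining closure of $\mathscr{C}$ on the sequence $(\mu_m)$ of ergodic measures. Once that is in place, the proof is a two- or three-line syllogism.
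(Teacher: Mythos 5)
Your proposal is correct and follows exactly the route the paper intends: Lemma~\ref{Besicovitch_rho_bar} to pass to quasi-generic points, Theorem~\ref{joining_thm} to realise $(X,T,\mu)$ as a factor of an ergodic countable joining of the $\mu_m$, and the closure properties of $\mathscr{C}$ together with the ergodicity statement of Theorem~\ref{thm:Besicovitch-quasi-genericity-and-ergodicity} to conclude. Your extra care in identifying the limit measure $\mu'$ of Theorem~\ref{joining_thm} with $\mu$ via uniqueness of weak$^*$ limits is a detail the paper leaves implicit, and it is handled correctly.
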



\section*{Acknowledgments}
The authors would like to thank Mariusz Lemańczyk for pointing out that the extension of their theorem from \cite{BKLR19} could be proven, which led to the main result of this paper. We also thank Dominik Kwietniak and Piotr Oprocha for their useful suggestions and remarks. Sejal Babel was supported by NCN Sonata Bis grant no. 2019/34/E/ST1/00237. 

\bibliographystyle{plain}
\bibliography{library.bib}

\end{document}